\theoremstyle{plain} 
\newtheorem{lemma}[equation]{Lemma}
\newtheorem*{fs}{Theorem}
\theoremstyle{definition}
\theoremstyle{remark}
\newtheorem{remark}[equation]{Remark}
\numberwithin{equation}{section}
\def\norm#1.#2.{\lVert#1\rVert_{#2}}
\def\Norm#1.#2.{\bigl\lVert#1\bigr\rVert_{#2}}
\def\NOrm#1.#2.{\Bigl\lVert#1\Bigr\rVert_{#2}}
\def\NORm#1.#2.{\biggl\lVert#1\biggr\rVert_{#2}}
\def\NORM#1.#2.{\Biggl\lVert#1\Biggr\rVert_{#2}}
\def\ip#1,#2,{\langle #1,#2\rangle}
\def\Ip#1,#2,{\bigl\langle#1,#2\bigr\rangle}
\def\IP#1,#2,{\Bigl\langle#1,#2\Bigr\rangle}
\def\R{\mathbb R}
\def\abs#1{\lvert#1\rvert}
\def\Abs#1{\bigl\lvert#1\bigr\rvert}
\def\ind{\mathbf 1}
\DeclareMathOperator*\essinf{ess\,inf}
\begin{document}
\title {The endpoint Fefferman-Stein inequality for the strong maximal function}
\begin{abstract} Let $M_nf$ denote the strong maximal function of $f$ on $\R^n$, that is the maximal average of $f$ with respect to $n$-dimensional rectangles with sides parallel to the coordinate axes. For any dimension $n\geq 2$ we prove the natural endpoint Fefferman-Stein inequality for $M_n$ and any strong Muckenhoupt weight $w$:
$$w(\{x\in\R^n: M_nf(x)>\lambda\})\lesssim_{w,n}  \int_{\R^n} \frac{|f(x)|}{\lambda}\Big(1+\big(\log^+ \frac{|f(x)|}{\lambda} \big)^{n-1}\Big) M_nw(x) dx.$$
This extends the corresponding two-dimensional result of T. Mitsis.
\end{abstract}

\subjclass[2010]{Primary: 42B25, Secondary: 42B20}

\author[T. Luque]{Teresa Luque}
\address{Teresa Luque, Departamento de Analisis Matem\'atico, Facultad de Matem\'aticas, Universidad de Sevilla, 41080 Sevilla, Spain}
\email{tluquem@us.es}

\author[I. Parissis]{Ioannis Parissis}
\address{Ioannis Parissis, Department of Mathematics, Aalto University, P.O.Box 11100, FI-00076 Aalto, Finland}
\email{ioannis.parissis@gmail.com}
\thanks{T.L. is supported by the Spanish Ministry of Economy and Competitiveness grant BES-2010-030264}
\thanks{I.P. is supported by the Academy of Finland, grant 138738.}

\maketitle

\section{Introduction}
\subsection*{The strong maximal function} Let $\mathfrak R_n$ denote the family of all rectangles in $\mathbb R^n$ with sides parallel to the coordinate axes. For a locally integrable function $f$ on $\R^n$ we will denote by $M_n f$ the \emph{strong maximal function}:
\begin{align*}
 M_nf(x)\coloneqq \sup_{\substack{R\in\mathfrak R_n\\ R\ni x}} \frac{1}{|R|} \int_R |f(y)|dy,\quad x\in\mathbb R^n.
\end{align*}
Here $|S|$ denotes the $n$-dimensional Lebesgue measure of a set $S\subset \R^n$. We will sometimes use the same notation for the $(n-1)$-dimensional Lebesgue measure, but this will be clear from the context.

The endpoint behavior of $M_n$ close to $L^1$ is given by the classical theorem of Marcinkiewicz, Jessen and Zygmund, \cite{JMZ}:
\begin{align}\label{e.fsnw}
 |\{x\in\R^n: M_nf(x)>\lambda\}|\lesssim_n \int_{\R^n} \frac{|f(x)|}{\lambda} \Big(1+\big(\log^+\frac{|f(x)|}{\lambda}\big)^{n-1}\Big)dx,
\end{align}
where $\log^+ t\coloneqq  \max(0,\log t)$. Inequality \eqref{e.fsnw} implies the strong differentiation of the integral of all functions $f\in L(1+(\log^+L)^{n-1})(\R^n)$, that is, all functions $f$ on $\R^n$ such that:
\begin{align*}
 \int_{\R^n}|f(x)| \Big(1+\big(\log^+|f(x)|\big)^{n-1}\Big)dx<+\infty.
\end{align*}
The strong maximal theorem cannot be improved, that is, the function $t(1+(\log^+ t)^{n-1})$ on the right hand side of \eqref{e.fsnw} cannot be replaced by any slower increasing function. Remember that the usual Hardy-Littlewood maximal function is the maximal average of $f$ with respect to all $n$-dimensional Euclidean cubes, or balls and it maps $L^1(\R^n)$ to $L^{1,\infty}(\R^n)$. The important difference to be noted here is that the strong maximal function is an $n$-parameter maximal average, in contrast to the the usual one-parameter Hardy-Littlewood maximal function, and this difference is reflected in the strong maximal theorem \eqref{e.fsnw} which requires extra logarithmic scales of integrability. The original proof from \cite{JMZ} relies on the observation that $M_n$ can be viewed as a composition of $n$ one-dimensional Hardy-Littlewood maximal operators. One then appeals to the one-dimensional theory to get \eqref{e.fsnw} together with its strong $L^p(\R^n)$ counterparts. A more geometric point of view was introduced by the work of C\'ordoba and R. Fefferman, \cite{CF}, who gave a proof of \eqref{e.fsnw} by means of a geometric covering argument. This is in a sense a dual point of view where the $n$-parameter composition of operators is replaced by induction on the dimension. The importance of the C\'ordoba-Fefferman geometric proof of the strong maximal theorem is highlighted by the fact that the usual Besicovitch covering argument fails when applied to families of rectangles having arbitrary eccentricities.

\subsection*{Strong weights}A \emph{weight} $w$ will be a locally integrable, non-negative function on $\mathbb R^n$. We will say that $w$ belongs to the class $A_p ^*$, $1<p<\infty$, whenever
\begin{align*}
 [w]_{A_p ^*}\coloneqq \sup_{R\in\mathfrak R_n} \bigg(\frac{1}{|R|}\int_R w \bigg)  \bigg( \frac{1}{|R|}\int_R w^{1-p'} \bigg)^{p-1}<+\infty.
\end{align*}
In this case we will say that $w$ is a \emph{strong $A_p$-weight}. For $p=1$ the class $A_1 ^*$ is defined by the condition
\begin{align*}
\frac{1}{|R|} \int_R w \leq C \essinf_{x\in R} w(x),\quad \mbox{for almost every} \ x\in R, \quad R\in\mathfrak R_n,
\end{align*}
which is equivalent to saying that $M_n w\leq C w$ almost everywhere in $\R^n$. The smallest constant $C>0$ in the previous inequality is the $A_1 ^*$-constant of the weight, denoted by $[w]_{A_1 ^*}$.

It follows by H\"older's inequality that the $A_p ^*$ classes are increasing, that is, for $1\leq p\leq q <\infty$ we have $A_p ^* \subset A_q ^*$. We define the class $A_\infty ^*$ by means of
\begin{align*}
 A_\infty ^* \coloneqq \bigcup_{p> 1} A_p ^*.
\end{align*}
It is equivalent to define the class $A_\infty ^*$ by the following property: there exist constants $\delta,c>0$ such that, given any rectangle $R\in\mathfrak R_n$ and a measurable subset $S\subset R$, then
\begin{align}\label{e.doubling}
 \frac{w(S)}{w(R)} \leq c \bigg(\frac{|S|}{|R|}\bigg)^\delta.
\end{align}
An important feature of strong $A_\infty$-weights is that if we fix any $t\in\R$ then the weight
\begin{align*}
w^t(x')\coloneqq w(x',t),\quad x'\in\R^{n-1},
\end{align*}
is an $A_\infty ^*$-weight on $\R^{n-1}$, \emph{uniformly} in $t\in\R$. In practice, uniformly means that all the constants connected with the properties of the $A_\infty ^*$-weight $w^t$ can be taken to be independent of $t$. For these and other properties of strong Muckenhoupt weights see for example \cite{BaKu} or \cite{GaRu}*{Chapter IV}.
\begin{remark}\label{r.esmall} Let $w\in A_\infty ^*$. By the previous discussion we see that there exists some $\epsilon =\epsilon(w)>0$ such that, for every rectangle $R\in\mathfrak R_n$ and all measurable sets $F\subset \R^n$, we have
\begin{align*}
 |R\cap F|\leq \epsilon|R| \Rightarrow w(R\cap F) \leq \frac{1}{2} w(R) \Rightarrow w(R\setminus F)\geq \frac{1}{2} w(R).
\end{align*}
In fact, it suffices to choose $\epsilon>0$ so that $c\epsilon^\delta\leq \frac{1}{2}$, where $c,\delta$ are the constants associated to $w\in A_\infty ^*$ from \eqref{e.doubling}. Since for any $t\in \R$ the weight $w^t\coloneqq w(\cdot,t)$ is an $A_\infty ^*$-weight on $\R^{n-1}$, uniformly in $t$, the $\epsilon>0$ can be chosen sufficiently small so that we have the previous property also for $w^t$, rectangles $R'\in\mathfrak R_{n-1}$ and sets $F'\subset \R^{n-1}$, uniformly in $t$. We will use this remark several times in what follows.
\end{remark}

It is known that $M_n$ is bounded on $L^p(w)$, $1<p<\infty$ if and only if $w\in A_p ^*$. This result follows again by an appeal to the one dimensional theory. For the necessity of the $A_p ^*$ condition, one argues as in the case of the usual $A_p$ weights. The corresponding endpoint bound is also true: the operator $M_n$ satisfies the distributional estimate
\begin{align*}
 w(\{x\in\R^n: M_nf(x)>\lambda\})\lesssim_{w,n}  \int_{\R^n} \frac{|f(x)|}{\lambda} \Big(1+\big(\log^+\frac{|f(x)|}{\lambda}\big)^{n-1}\Big)w(x)dx
\end{align*}
whenever $w\in A_1 ^*$. For these results see \cite{BaKu}*{Theorems 2.1 and 2.3}.

\subsection*{Weighted strong maximal function} For $w\in A_\infty ^*$ we will also consider the weighted strong maximal function $M^w _n$, defined with respect to $w$:
\begin{align*}
 M_n ^wf(x) \coloneqq  \sup_{\substack{R\in\mathfrak R_n\\ R\ni x}} \frac{1}{w(R)}\int_R |f(y)| w(y) dy.
\end{align*}
For the weighted strong maximal function $M_n ^w$, R. Fefferman showed in \cite{F} that it maps $L^p(w)$ to $L^p(w)$ whenever $w\in A_\infty ^*$:
\begin{align}\label{e.weighted}
 \norm M_n ^w f.L^p(w). \leq_{w,n} c_{p,n} \norm f.L^p(w). , \quad 1<p\leq \infty.
\end{align}
Furthermore we have the asymptotic estimate
\begin{align}\label{e.strong}
c_{p,n}=O_{n}( (p-1)^{-n})\quad\mbox{as}\quad p\to 1^+.
\end{align}

The behavior of the constants is not explicitly studied in \cite{F} but follows by a close examination of the proof and the standard norm bounds from Marcinkiewicz interpolation. See also \cite{LOSH}*{Lemma 4}. The endpoint bound for $M^w _n$ is also true, namely $M^w _n$ satisfies
\begin{align}\label{e.a1}
  w(\{x\in\R^n: M^w _nf(x)>\lambda\})\lesssim_{w,n}  \int_{\R^n} \frac{|f(x)|}{\lambda} \Big(1+\big(\log^+\frac{|f(x)|}{\lambda}\big)^{n-1}\Big)w(x)dx
\end{align}
whenever $w\in A_\infty ^*$. This was proved by Jawerth and Torchinsky, \cite{JT}, and independently by Long and Shen \cite{LOSH}.

\subsection*{Fefferman-Stein inequality} By this we mean in general an inequality of the form
\begin{align*}
 \int_{\R^n} (\mathcal Mf)^p w \lesssim_{w,n} \int_{\R^n} |f|^p \mathcal M w dx,\quad 1<p<\infty,
\end{align*}
where $\mathcal M$ denotes some maximal operator. Inequalities of this type are important since, among other things, they can be used to derive the boundedness of vector-valued maximal operators. In fact this inequality was first proved for the Hardy-Littlewood maximal function by C. Fefferman and Stein, \cite{FS}, for \emph{every} non-negative, locally integrable weight $w$. The main application in \cite{FS} was exactly the vector-valued extension of the classical Hardy-Littlewood maximal theorem. For the strong maximal function the same inequality is true provided that $w\in A_\infty ^*$; see \cite{Lin} for a direct proof of this result and  also \cite{Per}, where the Fefferman-Stein inequality is obtained as a corollary of a more general two weight-norm inequality. Observe that, as in the case of \eqref{e.weighted}, we need some extra assumption on the weight in order to prove the Fefferman-Stein inequality for the strong maximal function. This should be contrasted to the corresponding result for the Hardy-Littlewood weighted maximal function, as well as to the Fefferman-Stein inequality for the Hardy-Littlewood maximal function, where no assumption on the weight is needed.

The form of the endpoint Fefferman-Stein inequality  depends on the corresponding unweighted endpoint properties of the maximal operator under study. For the usual Hardy-Littlewood maximal function $M_Q$ the right statement is
\begin{align*}
 w(\{x\in\R^n: M_Qf(x)>\lambda\})\lesssim_{n} \frac{1}{\lambda} \int_{\R^n} |f(x)| M_Q w(x) dx.
\end{align*}
The natural endpoint Fefferman-Stein inequality for the strong maximal function was proved by Mitsis, \cite{Mitsis}, in dimension $n=2$. In particular Mitsis showed that
\begin{align*}
 w(\{x\in\R^2: M_nf(x)>\lambda\})\lesssim_{w}  \int_{\R^2} \frac{|f(x)|}{\lambda}\Big(1+ \log^+ \frac{|f(x)|}{\lambda}  \Big) M_n w(x) dx.
\end{align*}

The main result of the current paper is the extension of the endpoint Fefferman-Stein inequality for the strong maximal function to all dimensions:
\begin{fs} Let $w\in A_\infty ^*$. For all dimensions $n\geq 1$ we have
\begin{align*}
 w(\{x\in\R^n: M_nf(x)>\lambda\})\lesssim_{n,w}  \int_{\R^n} \frac{|f(x)|}{\lambda}\Big(1+\big(\log^+ \frac{|f(x)|}{\lambda} \big)^{n-1}\Big) M_n w(x) dx.
\end{align*}
\end{fs}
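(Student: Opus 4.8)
\emph{The plan is to} argue geometrically, in the spirit of C\'ordoba--Fefferman, inducting on the dimension $n$; the base case $n=1$ is the classical one-dimensional Fefferman--Stein weak-type $(1,1)$ bound for the Hardy--Littlewood maximal function, which holds for \emph{every} weight and carries no logarithmic factor. Fix $\lambda>0$ and set $\Phi_n(t)\coloneqq t\bigl(1+(\log^+ t)^{n-1}\bigr)$. By inner regularity of $w\,dx$ and the definition of $M_n$ it suffices to bound $w\bigl(\bigcup_k R_k\bigr)$ for an arbitrary finite family $\{R_k\}\subset\mathfrak R_n$ with $\lambda\abs{R_k}<\int_{R_k}\abs f$, by a constant depending only on $n$ and $w$. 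For a single such rectangle the crude bound $w(R_k)\le\abs{R_k}\,\essinf_{R_k}M_nw$ combined with the selection inequality yields $\lambda\,w(R_k)\le\int_{R_k}\abs f\,M_nw$; the entire difficulty is to sum such estimates over a highly overlapping family, losing only $\log^+$-powers and retaining the correct weight on the right.

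\emph{The core of the argument} is a weighted C\'ordoba--Fefferman covering step. First I would order the $R_k$ by decreasing side length in the $x_n$-direction and greedily retain a subfamily $\{\tilde R_j\}$, discarding a rectangle once it is almost covered, in Lebesgue measure, by the previously selected ones. Remark~\ref{r.esmall}, i.e. the $A_\infty^*$ doubling \eqref{e.doubling}, then upgrades ``almost covered in Lebesgue measure'' to ``covered at least halfway in $w$-measure'', giving $w\bigl(\bigcup_k R_k\bigr)\lesssim_w w(G)$ with $G\coloneqq\bigcup_j\tilde R_j$. The heart of the proof is the exponential overlap estimate for the selected family,
\begin{equation*}
\int_{G}\exp\Bigl(c_n\bigl(\textstyle\sum_j\ind_{\tilde R_j}\bigr)^{\frac{1}{n-1}}\Bigr)\,w\,dx\ \lesssim_w\ w(G),
\end{equation*}
which I would prove by induction on $n$. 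Slicing in the $x_n$-variable and using the fact recalled in the excerpt, that each $w^{t}(x')=w(x',t)$ is an $A_\infty^*$-weight on $\R^{n-1}$ \emph{uniformly} in $t$, the projected $(n-1)$-dimensional rectangles inherit, through the ordering and the selection, a controlled overlap governed by the inductive hypothesis in dimension $n-1$; the remaining one-dimensional averaging in $x_n$ promotes the resulting exponential integrability to exponent $\tfrac1{n-1}$ (for $n=2$ the projections are intervals of bounded overlap).

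\emph{To conclude}, write $N=\sum_j\ind_{\tilde R_j}$. Summing the single-rectangle inequalities gives $\lambda\,w\bigl(\bigcup_k R_k\bigr)\lesssim_w\int N\abs f\,M_nw$, and the pointwise Young inequality $\tfrac{\abs f}{\lambda}N\le\Phi_n(\abs f/\lambda)+\Psi_n(N)$, with complementary function $\Psi_n(s)=\exp\bigl(s^{1/(n-1)}\bigr)-1$, separates the two scales, the $\Phi_n$-term producing exactly $\int\Phi_n(\abs f/\lambda)\,M_nw$. \textbf{The main obstacle}, and the genuinely new point beyond the unweighted Marcinkiewicz--Zygmund/C\'ordoba--Fefferman theorem and beyond the endpoint bound \eqref{e.a1} for $M_n^w$, is that this Young splitting leaves the remainder $\int\Psi_n(N)\,M_nw$, whereas the covering estimate above controls the overlap only against $w$, not against the strictly larger weight $M_nw$ forced in by the averaging geometry. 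Reconciling these two weights is the crux: I expect it to require proving the exponential overlap estimate directly against the measure $M_nw\,dx$ — which in turn demands a slice-wise doubling property of $M_nw$ deduced from the uniform $A_\infty^*$ character of the slices $w^t$ — or, equivalently, a comparison reducing $M_nw$-overlap to $w$-overlap, so that the remainder can be absorbed after choosing the constant $c_n$ in the exponential small enough that the absorbed contribution carries coefficient $<1$. This compatibility between the two weights is precisely where the hypothesis $w\in A_\infty^*$ is indispensable.
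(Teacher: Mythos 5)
Your outline reproduces the paper's skeleton faithfully up to the final absorption step: the greedy selection ordered by the $x_n$-side is Lemma \ref{l.sub}, the upgrade from Lebesgue-sparseness to $w$-sparseness is Remark \ref{r.esmall}, the starting bound $\lambda\,w(G)\lesssim\int \abs f\,N\,M_nw\,dx$ is the paper's first estimate, and your exponential overlap estimate against $w\,dx$ is equivalent (by Taylor expansion and Stirling) to the paper's Lemma \ref{l.t*1}, which is proved there by slicing, duality, and R.~Fefferman's bound \eqref{e.strong} with its sharp constant $O((p-1)^{-n})$, rather than by induction on $n$ --- a difference of packaging, not of substance. The genuine gap is exactly the point you flag and leave open, and the repair you propose --- proving the exponential overlap estimate directly against the measure $M_nw\,dx$ --- is not available: that statement is \emph{false} for general $w\in A_\infty^*$. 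Take $n=2$, $w(x_1,x_2)=\min(\abs{x_2}^{1/2},1)$, which lies in $A_2^*$ since it is an $A_2$ weight of $x_2$ alone, and for large $K$ and $B$ consider the family $R_j=[0,B^j]\times[0,K^{-2}B^{-j}]$, $1\leq j\leq m$, which satisfies \eqref{e.p2} with $\epsilon=1/B$. Writing $G=\bigcup_jR_j$ and $N=\sum_j\ind_{R_j}$, one computes $w(R_j)\simeq K^{-3}B^{-j/2}$, hence $w(G)\simeq_B K^{-3}$, while $\abs G\geq\abs{R_1}=K^{-2}$ and $M_2w\geq\int_0^1t^{1/2}dt=\tfrac23$ at every point of $G$ (average $w$ over $[0,B^j]\times[0,1]\supset R_j$). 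Already the first Taylor term then gives
\begin{align*}
\int_G\bigl(e^{\theta N}-1\bigr)M_2w\,dx\;\geq\;\tfrac23\,(e^{\theta}-1)\,\abs G\;\geq\;\tfrac23\,\theta\,K^{-2}\;\gtrsim_B\;\theta\,K\,w(G),
\end{align*}
and since $K$ is arbitrary for this one fixed weight, no choice of $\theta=\theta(w,n)$, indeed no constant $C(\theta,w,n)$, makes the left-hand side $\lesssim w(G)$. So the absorption step, in the form $\int_G\Psi_n(N)\,M_nw\,dx\leq\tfrac12 w(G)$, cannot be carried out.

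What the paper does instead is weaken what must be proved rather than strengthen the covering estimate. It never applies the Young splitting with $t=N$; it keeps the exact density $T^*w\coloneqq\sum_k\frac{w(R_k ^s)}{\abs{R_k ^s}}\ind_{R_k ^s}$, which satisfies $T^*w\leq N\,M_nw$ pointwise but can be far smaller, applies the splitting with $t=T^*w/M_nw$, and first discards the set $\{T^*w\leq\delta M_nw\}$, whose contribution is bounded by $\delta\int\abs f\,M_nw$ outright. On the complementary set the $k$-th Taylor term is rewritten as
\begin{align*}
\Bigl(\frac{T^*w}{M_nw}\Bigr)^{\frac{k}{n-1}}M_nw=\Bigl(\frac{T^*w}{M_nw}\Bigr)^{\frac{k}{n-1}-1}T^*w\;\lesssim\; N^{\frac{k}{n-1}}\,T^*w,
\end{align*}
and the duality $\int_\Omega N^{a}\,T^*w\,dx=\int_\Omega T(N^{a})\,w\,dx$, where the pre-adjoint $T$ is pointwise dominated by $M_n$ and hence bounded on $L^{p_o}(w)$, converts the entire remainder into moments of $N$ against $w\,dx$ --- precisely what Lemma \ref{l.t*1} controls, with the growth $c_{p,n}=O(p^{n-1})$ (inherited from \eqref{e.strong}) being exactly what makes $\sum_k\theta^k(ck)^k/k!$ summable for small $\theta$. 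Note how this mechanism disposes of the counterexample above: there $T^*w\lesssim K^{-1}$ while $M_2w\gtrsim1$ on $G$, so once $K\gtrsim1/\delta$ the whole configuration lies in the harmless set $\{T^*w\leq\delta M_2w\}$ and no exponential estimate is invoked at all. In short, the reconciliation you identify as the crux is achieved by restricting the exponential bound to the set where $T^*w\simeq M_nw$, trading $M_nw\,dx$ there for $T^*w\,dx$, and then for $w\,dx$ by duality; this twist, absent from your outline, is the decisive idea of the proof and cannot be replaced by an overlap estimate against $M_nw\,dx$.
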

By interpolation, the Fefferman-Stein inequality of our main theorem  above implies the strong $L^p$-version of the Fefferman-Stein inequality from \cite{Lin}, \cite{Per}. Furthermore, since every $A_1 ^*$-weight is an $A_\infty ^*$-weight, we recover the endpoint inequality \eqref{e.a1} for $A_1 ^*$-weights. 

It should be noted that the proof of Mitsis in \cite{Mitsis} uses the combinatorics of two-dimensional rectangles, which allow one to get favorable estimates for the measures
\begin{align*}
 |\{x\in R_k: \sum_{k=1} ^N \ind_{R_k}(x)=\ell\}|;
\end{align*}
here $\{R_k\}_{1\leq k \leq N}$ is a sequence of rectangles which satisfy a certain sparseness property and $\ell$ is any integer in $\{1,2,\ldots,N\}$. These combinatorics do not seem to be readily available in higher dimensions and so we adopt a different approach, which relies on the boundedness of the weighted strong maximal function $M_n ^w$ and the precise estimate for its norm, \eqref{e.strong}. In particular, our approach is inspired by the arguments in \cite{LOSH}, a paper which seems to have been overlooked by most of the works on the weighted inequalities for the strong maximal function.
\subsection*{Acknowledgments} This work was done while T.L was visiting T.Hyt\"onen at University of Helsinki. The authors would like to thank him for his generosity and hospitality. We also want to thank C. P\'erez and T. Mitsis for some valuable discussions on the subject of this paper.

\section{Notation} We write $A\lesssim B$ if $A\leq C B$ for some numerical constant $C>0$. In order to indicate the dependence of the constant on some parameter $n$ (say), we write $A\lesssim_n B$. Similarly, $A\simeq B$ means that $A\lesssim B$ and $B\lesssim A$.

\section{Some geometry of $n$-dimensional rectangles} In this section we recall some sparseness properties of $n$-dimensional rectangles, introduced in \cite{CF}. Here we adopt the slightly different approach from \cite{LOSH}. In fact, both Lemmas in this section are mentioned in \cite{LOSH}. However, we present the proofs for the sake of completeness.

For $t\in \R$ and $E\subset \R^n$ we introduce the slice operator
\begin{align*}
P_t(E)\coloneqq \{x' \in \R^{n-1}: (x',t)\in E\}.
\end{align*}
Thus $P_t$ is the `slice' of $E$ by a hyperplane perpendicular to the $n$-th coordinate axis at level $t\in \R$. The $(n-1)$-dimensional projection is
\begin{align*}
P_\parallel(E)\coloneqq\{x'\in \R^{n-1}: (x',t)\in E\quad\mbox{for some}\quad t\in\R\}.	
\end{align*}
We will also use the one-dimensional projection $P^\perp$ defined for $E\subset \R^n$ as
\begin{align*}
P^\perp(E)\coloneqq \{t\in\R: (x',t)\in E\quad\mbox{for some}\quad x'\in\R^{n-1} \}.
\end{align*}
If $R\in\mathfrak R_n$ observe that we have
\begin{align*}
	R=P_\parallel(R)\times P^\perp(R)=P_t(R)\times P^\perp(R),\quad\mbox{for all}\quad t\in P^\perp(R).
\end{align*}
For any interval $I\subset \R$, let $I^*$ be the interval with the same center and three times the length of $I$, $|I^*|=3|I|$. For $R\in\mathfrak R_n$ we then use the notation
\begin{align*}
R^*\coloneqq P_\parallel(R)\times (P^\perp(R))^*.
\end{align*}
Thus $R^*$ is the rectangle with the same center as $R$ and whose sides parallel to the first $n-1$ coordinate axes have the same lengths as the corresponding sides of $R$; the side of $R$ which is parallel to the $n$-th coordinate axis has length equal to three times the length of the corresponding side of $R$.

Let $\mathcal R = \{R_k\}_{1\leq k\leq N}$ be a finite sequence of rectangles from $\mathfrak R_n$. We will say that $\mathcal R$ satisfies the sparseness property \eqref{e.p2} if
\begin{align*}\tag{$P_2$}\label{e.p2}
 \begin{cases} P^\perp (R_1)\geq P^\perp(R_2)\geq \cdots\geq P^\perp(R_N),
\\
|R_k \cap \bigcup_{j< k} R_j ^* |\leq \epsilon |R_k|, \quad k=1,2,\ldots,N. \end{cases}
\end{align*}
Here $0<\epsilon<1$ will be assumed sufficiently small in various parts of the arguments below.

For $t\in\R$ we now consider the collection $\mathcal T (t)=\mathcal T= \{P_t(R_k)\}_{1\leq k \leq N} \subset\mathfrak R_{n-1}$ which is produced by slicing all the $n$-dimensional rectangles of $\mathcal R $ by a hyperplane perpendicular to the $n$-th coordinate axis, at the level $t$. The collection $\mathcal T$ depends on $t$ but we will many times suppress this fact in what follows. The main point about the collections $\mathcal R$ and $\mathcal T$ is contained in the following standard fact.
\begin{lemma}\label{l.slice} Suppose that the sequence $\mathcal R= \{R_k\}_{1\leq k\leq N}$ has the sparseness property \eqref{e.p2}. Then, for all $t\in\R$, the $(n-1)$-dimensional collection of rectangles $\mathcal T(t)=\{P_t(R_k)\}_{1\leq k \leq N}$ has the sparseness property \eqref{e.p1}, uniformly in $t$:
\begin{align*}\tag{$P_1$}\label{e.p1}
 |P_t(R_k) \cap \bigcup_{j<k} P_t(R_j) |\leq \epsilon |P_t(R_k)|,  \quad k=1,2,\ldots,N.
\end{align*}
\end{lemma}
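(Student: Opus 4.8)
The plan is to fix $t\in\R$ and verify \eqref{e.p1} for each $k$ separately. If $t\notin P^\perp(R_k)$ then $P_t(R_k)=\emptyset$ and the inequality is trivial, so we may assume $t\in P^\perp(R_k)$, in which case $P_t(R_k)=P_\parallel(R_k)$. The essential difficulty is that the hypothesis $|R_k\cap\bigcup_{j<k}R_j^*|\leq\epsilon|R_k|$ from \eqref{e.p2} is a statement about \emph{full $n$-dimensional measure}, and hence only controls an average over the $n$-th variable, whereas \eqref{e.p1} demands a bound on a \emph{single} $(n-1)$-dimensional slice for \emph{every} level $t$. The whole idea is therefore to show that the relevant slice-overlap is not merely captured on average but sits inside a full vertical column contained in $R_k\cap\bigcup_{j<k}R_j^*$; this is exactly where the tripling of the $n$-th side in the definition of $R_j^*$, combined with the length ordering, will be used.

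The geometric heart of the argument is the following claim: if $j<k$ and $x'\in P_t(R_k)\cap P_t(R_j)$, then the entire column $\{x'\}\times P^\perp(R_k)$ lies in $R_k\cap R_j^*$. Indeed, $x'\in P_\parallel(R_k)$ gives $\{x'\}\times P^\perp(R_k)\subset R_k$, and $x'\in P_\parallel(R_j)$ takes care of the first $n-1$ coordinates of $R_j^*=P_\parallel(R_j)\times(P^\perp(R_j))^*$, so it only remains to check $P^\perp(R_k)\subset(P^\perp(R_j))^*$. Here the ordering in the first line of \eqref{e.p2} gives $|P^\perp(R_k)|\leq|P^\perp(R_j)|$ for $j<k$, while $x'\in P_t(R_k)\cap P_t(R_j)$ forces $t\in P^\perp(R_k)\cap P^\perp(R_j)$. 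Writing $c_j$ for the center of $P^\perp(R_j)$, any $t'\in P^\perp(R_k)$ then satisfies
\begin{align*}
|t'-c_j|\leq|t'-t|+|t-c_j|\leq|P^\perp(R_k)|+\tfrac12|P^\perp(R_j)|\leq\tfrac32|P^\perp(R_j)|,
\end{align*}
which is precisely the condition $t'\in(P^\perp(R_j))^*$. This is the one step that genuinely requires the tripling: it is what upgrades a single point of slice-overlap into the containment of a whole column.

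Taking the union of the claim over $j<k$ yields
\begin{align*}
\Big(P_t(R_k)\cap\bigcup_{j<k}P_t(R_j)\Big)\times P^\perp(R_k)\subset R_k\cap\bigcup_{j<k}R_j^*.
\end{align*}
Comparing $n$-dimensional Lebesgue measures (the left-hand side being a product set), the left-hand side has measure $\big|P_t(R_k)\cap\bigcup_{j<k}P_t(R_j)\big|\,|P^\perp(R_k)|$, while \eqref{e.p2} bounds the right-hand side by $\epsilon|R_k|=\epsilon|P_\parallel(R_k)|\,|P^\perp(R_k)|$. Dividing through by $|P^\perp(R_k)|>0$ and recalling $P_t(R_k)=P_\parallel(R_k)$ gives exactly \eqref{e.p1}, uniformly in $t$. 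As indicated above, the only real obstacle is the transition from the global $n$-dimensional sparseness to a per-slice estimate; once the column-containment claim is established, the conclusion follows by an elementary Fubini-type comparison of measures.
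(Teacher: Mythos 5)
Your proof is correct and takes essentially the same approach as the paper: your column-containment claim, proved via the triangle-inequality argument showing $P^\perp(R_k)\subset (P^\perp(R_j))^*$ whenever the slices overlap, is exactly the paper's key observation (stated there as the interval fact $I_1\cap I_2\neq\emptyset$, $|I_1|\leq |I_2|$ $\Rightarrow$ $I_1\subseteq I_2^*$, leading to the identity $R_k\cap R_j^*=P^\perp(R_k)\times P_t(R_k\cap R_j)$), and your final Fubini-type comparison of product measures is the paper's concluding chain of estimates.
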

\begin{proof} We fix some $1\leq k \leq N$ and $t\in P^\perp(R_k)$. Denoting $J\coloneqq \{j<k: P_t(R_k)\cap P_t(R_j)\neq \emptyset\}$ we have by the second condition in \eqref{e.p2} that
\begin{align}\label{e.first}
 \epsilon |R_k| &\geq |R_k \cap \bigcup_{j<k} R_j ^*|=|\bigcup_{j<k} (R_k \cap R_j ^*)|\geq|\bigcup_{j\in J} (R_k \cap R_j ^*)|
\end{align}
Observe that for $j\in J$ we have that $\emptyset \neq P^\perp(R_k)\cap P^\perp(R_j)\ni t$ and by the first condition in \eqref{e.p2} we have $|P^\perp(R_j)|\geq |P^\perp(R_k)|$. A moment's reflection shows that if $I_1,I_2$ are intervals in $\R$, $|I_2|\geq |I_1|$ and $I_1\cap I_2\neq \emptyset$ then $I_1\subseteq I_2^*$. We conclude that $P^\perp(R_k)\subseteq P^\perp(R_j ^*)$. Thus the $n$-dimensional rectangle $R_k\cap R_j^*$ is of the form $P^\perp(R_k)\times P_\parallel(R_k \cap R_j ^*)$. However, $j\in J$ implies that $P_t(R_k\cap R_j)=P_t(R_k)\cap P_t(R_j)\neq \emptyset$, so we conclude that $P_\parallel(R_k\cap R_j ^*)=P_t(R_k\cap R_j)$ and thus
\begin{align}\label{e.second}
 R_k\cap R_j ^*= P^\perp(R_k) \times P_t(R_k\cap R_j).
\end{align}
Now estimate \eqref{e.first} and identity \eqref{e.second} give
\begin{align*}
\epsilon |P^\perp(R_k)|\times |P_t(R_k)|= \epsilon |R_k|&\geq \Abs{\bigcup_{j\in J} P^\perp(R_k)\times P_t(R_k\cap R_j)}
\\
& = |P^\perp(R_k)|\times \Abs{\bigcup_{j\in J} P_t(R_k\cap R_j)}
\\
&=  |P^\perp(R_k)|\times \Abs{ P_t(R_k)\cap\bigcup_{j\in J} P_t(R_j)}
\\
&=  |P^\perp(R_k)|\times | P_t(R_k)\cap\bigcup_{j<k} P_t(R_j)|.
\end{align*}
This proves the lemma for $t\in P^\perp(R_k)$ while for $t\notin P^\perp(R_k)$ the conclusion follows trivially.\end{proof}
The next lemma gives a precise quantitative bound on the overlap of the rectangles in $\mathcal R$ under the sparseness property \eqref{e.p2}.
\begin{lemma}\label{l.t*1} Let $w\in A_\infty ^*$ and suppose that the finite sequence $\mathcal R=\{R_k\}_{1\leq k \leq N}\subset \mathfrak R_n$ satisfies property \eqref{e.p2} with $\epsilon$ sufficiently small, depending on the weight $w$. We set $\Omega\coloneqq \cup_{k=1} ^N R_k$. For $1<  p < \infty$ we have
 \begin{align*}
  \bigg( \int_\Omega \Abs{\sum_{k=1} ^N \ind_{R_k}}^pw(x)dx \bigg)^\frac{1}{p} \lesssim_{w,n} c_{p,n} w(\Omega)^\frac{1}{p}
 \end{align*}
with $c_{p,n}= O_n(p^{n-1})$ as $p\to +\infty$.
\end{lemma}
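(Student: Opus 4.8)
The plan is to argue by duality and, crucially, to replace the full $n$-dimensional weighted maximal operator by its \emph{$(n-1)$-dimensional} analogue acting on horizontal slices; this reduction is exactly what produces the exponent $n-1$ rather than $n$, and it requires no induction on the lemma itself, only the known weighted maximal theorem in dimension $n-1$. Set $u\coloneqq\sum_{k=1}^N\ind_{R_k}$, which is supported on $\Omega$. By $L^p(w)$--$L^{p'}(w)$ duality it suffices to estimate $\sum_{k=1}^N\int_{R_k}g\,w\,dx$ for an arbitrary nonnegative $g$ with $\lVert g\rVert_{L^{p'}(w)}\leq 1$, and to show that this quantity is $\lesssim_{w,n}(p-1)^{n-1}\,w(\Omega)^{1/p}$.

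First I would slice in the last coordinate. Writing $R_k=P_t(R_k)\times P^\perp(R_k)$, using $w^t(x')\coloneqq w(x',t)$ and $g^t\coloneqq g(\cdot,t)$, Fubini gives
\begin{align*}
\sum_{k=1}^N\int_{R_k}g\,w\,dx=\int_{\R}\Big(\sum_{k:\,t\in P^\perp(R_k)}\int_{P_t(R_k)}g^t(x')\,w^t(x')\,dx'\Big)\,dt.
\end{align*}
For each fixed $t$ the family $\{P_t(R_k):t\in P^\perp(R_k)\}$ satisfies the $(n-1)$-dimensional sparseness property \eqref{e.p1} by Lemma \ref{l.slice}. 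Since $w^t$ is an $A_\infty^*$-weight on $\R^{n-1}$ uniformly in $t$, Remark \ref{r.esmall} lets me pass to the pairwise disjoint sets $E_k^t\coloneqq P_t(R_k)\setminus\bigcup_{j<k}P_t(R_j)$, which satisfy $w^t(E_k^t)\geq\tfrac12\,w^t(P_t(R_k))$. Bounding the slice average $\frac{1}{w^t(P_t(R_k))}\int_{P_t(R_k)}g^t w^t$ by $\inf_{E_k^t}M_{n-1}^{w^t}(g^t)$ and summing over the disjoint $E_k^t$ yields, for each $t$, the slice estimate $\sum_{k}\int_{P_t(R_k)}g^t w^t\leq 2\int_{P_t(\Omega)}M_{n-1}^{w^t}(g^t)\,w^t\,dx'$.

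Integrating this in $t$ and applying Hölder's inequality in the measure $w^t(x')\,dx'\,dt=w\,dx$ on $\Omega$, I would obtain
\begin{align*}
\sum_{k=1}^N\int_{R_k}g\,w\,dx\ \leq\ 2\int_\R\int_{P_t(\Omega)}M_{n-1}^{w^t}(g^t)\,w^t\,dx'\,dt\ \leq\ 2\Big(\int_\R\int_{P_t(\Omega)}\big(M_{n-1}^{w^t}(g^t)\big)^{p'}w^t\,dx'\,dt\Big)^{1/p'}w(\Omega)^{1/p}.
\end{align*}
Finally, because the $(n-1)$-dimensional weighted maximal operators $M_{n-1}^{w^t}$ are bounded on $L^{p'}(w^t)$ uniformly in $t$ with norm $O_n\big((p'-1)^{-(n-1)}\big)$ by the $(n-1)$-dimensional instance of \eqref{e.weighted}--\eqref{e.strong}, Fubini in $t$ bounds the bracketed factor by $c_{p',n-1}\lVert g\rVert_{L^{p'}(w)}\leq c_{p',n-1}$. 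Since $p'-1=(p-1)^{-1}$, this is $\lesssim_{w,n}(p-1)^{n-1}=O_n(p^{n-1})$ as $p\to+\infty$, which is the claimed bound.

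The step I would worry about most is the dimensional bookkeeping that produces the exponent $n-1$ rather than $n$: applying the full $n$-dimensional operator $M_n^w$ directly, via the disjoint sets $R_k\setminus\bigcup_{j<k}R_j^*$ in $\R^n$, would only give the weaker constant $O_n(p^n)$ coming from $(p-1)^{n}$ in \eqref{e.strong}. The gain comes from slicing away the last coordinate essentially for free and paying only for the $(n-1)$-dimensional maximal function in the slices; this hinges on Lemma \ref{l.slice} (so that the slices are genuinely sparse) together with the \emph{uniformity in $t$} of both the $A_\infty^*$ constants of $w^t$ and the operator norm of $M_{n-1}^{w^t}$, which must be tracked carefully throughout.
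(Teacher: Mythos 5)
Your proof is correct and follows essentially the same route as the paper's: slicing perpendicular to the $n$-th axis, using Lemma \ref{l.slice} and Remark \ref{r.esmall} to pass to the disjoint sets $E_k^t$ with $w^t(E_k^t)\geq\tfrac12 w^t(P_t(R_k))$, dominating slice averages by $M_{n-1}^{w^t}$, and invoking the bound \eqref{e.strong} in dimension $n-1$ uniformly in $t$. The only cosmetic difference is that you dualize globally on $\Omega$ and then slice (recombining via H\"older and Fubini), whereas the paper dualizes within each fixed slice, obtains the estimate $\int_{\Omega_t}\abs{\sum_k\ind_{P_t(R_k)}}^p w^t\lesssim p^{(n-1)p}w^t(\Omega_t)$ uniformly in $t$, and then integrates in $t$; the underlying computation is identical term by term.
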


\begin{proof} For a sequence $\{R_k\}_{1\leq k \leq N}$ as before, consider the sequence $\mathcal T(t)$ of $(n-1)$-dimensional rectangles, by slicing the collection $\mathcal R$ with a hyperplane perpendicular to the $n$-th coordinate axis, at level $t\in \R$. Let $\Omega_t\coloneqq P_t(\Omega)$ denote the corresponding slice of $\Omega$ at level $t$ and set $T_k\coloneqq P_t(R_k)$ in order to simplify the notation. By Lemma \ref{l.slice} the collection $\mathcal T(t)=\{T_k\}_{1\leq k \leq N}$ has the property \eqref{e.p1}. We set $E_k\coloneqq T_k\setminus \cup_{j<k} T_j$. For fixed $t\in \R$, the function $w^{t}(x')=w(x',t),\ x'\in \R^{n-1}$, is an $A_\infty ^*$-weight in $\mathbb R^{n-1}$, uniformly in $t\in\R$; see \cite{F}. By the property \eqref{e.p1} and the fact that $w^t\in A_\infty ^*$ uniformly in $t$, we will have that $w^t (T_k)\geq w^t (E_k)\geq \frac{1}{2} w^t (T_k)$ if $\epsilon>0$ was selected sufficiently small in property \eqref{e.p2}, and thus also in \eqref{e.p1}, according to Remark \ref{r.esmall}.

Define the linear operator
\begin{align*}
 L_{w^t} f(x')\coloneqq \sum_{k=1} ^N \frac{1}{w^t(T_k)} \bigg(\int_{T_k} f(y')w^t (y') dy' \bigg) \ind_{E_k}(x'), \quad x' \in \R^{n-1}.
\end{align*}
For any locally integrable function $f$ on $\R^{n-1}$ we have that $L_{w^t}f(x') \leq M_{w^t}f(x'),\ x'\in\R^{n-1}$. Also observe that for $f,g$ locally integrable we have
\begin{align*}
 \int_{\Omega_t} L_{w^t} f(x') g(x')w^t (x')dx' &=\int_{\Omega_t} \sum_{k=1} ^N \frac{1}{w^t(T_k)} \bigg( \int_{E_k} g(y')w^t (y')dy' \bigg)\ind_{R_k}(x') f(x')w^t (x') dx'
\\
&\eqqcolon \int_{\Omega_t} L_{w^t} ^*  g(x') f(x') w^t(x')dx'.
\end{align*}
Furthermore $L_{w^t} ^*(\ind_{\Omega_t} ) = \sum_{k=1} ^N \frac{w^t(E_k)}{w^t(T_k)}\ind_{T_k}\geq \frac{1}{2} \sum_{k=1} ^N \ind_{T_k}$. For any locally integrable function $g$ on $\R^{n-1}$ we thus have
\begin{align*}
 \int_{\Omega_t} g(x') \sum_{k=1} ^N \ind_{T_k}(x') w^t (x') dx'& \lesssim  \int_{\Omega_t} g(x') L_{w^t} ^* (\ind_{\Omega_t})(x') w^t(x') dx'
\\
&=\int_{\Omega_t} L_{w^t} g(x') w^t(x')dx'\leq \norm M_{w^t} g.L^{p'}(w^t,\R^{n-1}). w^t(\Omega_t)^\frac{1}{p}
\\
&\lesssim_{w,n} (p'-1)^{-(n-1)} \norm g.L^{p'}(w^t,\R^{n-1}). w^t(\Omega_t)^\frac{1}{p},
\end{align*}
by \eqref{e.strong}. Taking the supremum over $g\in L^{p'}(\R^{n-1})$ with $\norm g.L^{p'}(w^t,\R^{n-1}).\leq 1$ gives the estimate
\begin{align}
\int_{\Omega_t} \Abs{\sum_{k=1} ^N \ind_{T_k}(x')}^p w^t (x')dx \lesssim_{w,n}  p^{(n-1)p} w^t(\Omega_t),
\end{align}
as $p\to \infty$. It is essential to note here that this estimate is uniform in $t\in \R$. Thus integrating over $t\in P^\perp(\Omega)$, gives the claim.
\end{proof}

\section{Proof of the endpoint Fefferman-Stein inequality}
We begin with a simple lemma.
\begin{lemma}\label{l.sub}
Let $\epsilon>0$, $f$ be a locally integrable function and set $F\coloneqq \{x\in\R^n: M_nf(x)>1\}$. There exists a finite collection of rectangles $\mathcal R= \{R_k ^s\}_{1\leq k \leq N}$ such that:
\begin{itemize}
 \item[(i)] The collection $\mathcal R$ has the property \eqref{e.p2} with parameter $\epsilon$.
\item[(ii)] For each $1\leq k \leq N$ we have
\begin{align*}
 |R_k ^s|<\int_{R_k ^s} |f(y)| dy
\end{align*}
\item [(iii)] We have the estimate
\begin{align*}
 w(F)\lesssim_{\epsilon,w,n} w(\cup_k R_k ^s).
\end{align*}
\end{itemize}
\end{lemma}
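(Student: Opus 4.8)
The plan is to follow the C\'ordoba--Fefferman selection scheme: I extract a \emph{finite} cover of a large (in $w$-measure) piece of $F$, thin it out to a sparse subfamily satisfying \eqref{e.p2}, and then check that the surviving rectangles still carry a definite proportion of the weight. For the first step, assuming as I may that $w(F)<\infty$ (the general case follows by exhausting $F$ by sets of finite weight), I would use that $w\,dx$ is a Radon measure to pick a compact $K\subseteq F$ with $w(K)\geq\tfrac12 w(F)$. Since $M_nf$ is lower semicontinuous, $F$ is open, and for each $x\in K$ the condition $M_nf(x)>1$ supplies a rectangle $R_x\in\mathfrak R_n$ with $x$ in its interior and $|R_x|<\int_{R_x}|f|$; a routine compactness argument then selects finitely many of these, say $Q_1,\dots,Q_M$, covering $K$. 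Each $Q_i$ already satisfies the average bound of (ii), and $w(F)\leq 2w(K)\leq 2\,w\big(\bigcup_i Q_i\big)$.

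Next I would run the greedy selection producing \eqref{e.p2}. Relabel so that $|P^\perp(Q_1)|\geq\cdots\geq|P^\perp(Q_M)|$, set $R_1^s\coloneqq Q_1$, and, having chosen $R_1^s,\dots,R_{k-1}^s$, take $R_k^s$ to be the next rectangle $Q_i$ in this order with $|Q_i\cap\bigcup_{j<k}(R_j^s)^*|\leq\epsilon|Q_i|$, discarding the rest. The decreasing ordering gives the first line of \eqref{e.p2} and the acceptance rule gives the second, so (i) holds; (ii) is inherited because each $R_k^s$ is one of the $Q_i$.

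The real obstacle is (iii): to show that the sparse family $\Omega_s\coloneqq\bigcup_k R_k^s$ still carries the weight of $\bigcup_i Q_i$, even though a discarded $Q_i$ is only guaranteed to have an $\epsilon$-proportion of its \emph{volume} inside the enlarged set $\Omega_s^*\coloneqq\bigcup_k(R_k^s)^*$. The device I would use to overcome the weakness of this $\epsilon$-overlap is to place the whole cover inside a single maximal super-level set. Indeed, a selected $Q_i$ lies in $\Omega_s\subseteq\Omega_s^*$, while a discarded $Q_i$ failed the test against rectangles chosen before it, all of whose dilates lie in $\Omega_s^*$, so that $\frac{1}{|Q_i|}\int_{Q_i}\ind_{\Omega_s^*}>\epsilon$ and hence $Q_i\subseteq\{M_n\ind_{\Omega_s^*}>\epsilon\}$. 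Thus $\bigcup_i Q_i\subseteq\{M_n\ind_{\Omega_s^*}>\epsilon\}$. Now, since $w\in A_\infty^*=\bigcup_{p>1}A_p^*$, we have $w\in A_{p_0}^*$ for some $p_0>1$, so $M_n$ is bounded on $L^{p_0}(w)$; Chebyshev then gives $w\big(\bigcup_i Q_i\big)\leq w(\{M_n\ind_{\Omega_s^*}>\epsilon\})\lesssim_{\epsilon,w,n} w(\Omega_s^*)$. Finally, because $R_k^s\subseteq(R_k^s)^*$ with $|(R_k^s)^*|=3|R_k^s|$ and $R_k^s\subseteq\Omega_s$, every point of $(R_k^s)^*$ lies in a rectangle on which the average of $\ind_{\Omega_s}$ is at least $1/3$, whence $\Omega_s^*\subseteq\{M_n\ind_{\Omega_s}\geq 1/3\}$; the same $L^{p_0}(w)$-bound yields $w(\Omega_s^*)\lesssim_{w,n} w(\Omega_s)$. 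Chaining these with $w(F)\leq 2w\big(\bigcup_i Q_i\big)$ produces (iii). The delicate point throughout is that each passage from volume-overlap to $w$-measure must cost only a constant, and this is exactly what the $A_\infty^*$ hypothesis, used in the form of the $L^{p_0}(w)$-boundedness of $M_n$, provides; it is also what lets us bypass the two-dimensional rectangle combinatorics of Mitsis.
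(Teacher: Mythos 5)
Your proof is correct and takes essentially the same route as the paper's: the same greedy selection of rectangles ordered by decreasing $|P^\perp(Q_i)|$, the same observation that every discarded rectangle lies in $\{M_n\ind_{\Omega_s^*}>\epsilon\}$, and the same appeal to the boundedness of $M_n$ on $L^{p_0}(w)$ for $w\in A_{p_0}^*\subset A_\infty^*$. If anything, you fill in two details the paper glosses over, namely the inner-regularity/compactness reduction to a finite cover of (a large-weight part of) $F$, and the justification of $w(\Omega_s^*)\lesssim_{w,n}w(\Omega_s)$ via the inclusion $\Omega_s^*\subseteq\{M_n\ind_{\Omega_s}\geq 1/3\}$.
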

\begin{proof} For every $x\in F$ let $R_x\in \mathfrak R_n$ be a rectangle such that 
	\begin{align*}
		\frac{1}{|R_x|}\int_{R_x}|f(y)|dy>1.
	\end{align*}
Without loss of generality we may assume that $\{R_x\}_{x\in F}$ is a finite sequence $\{R_j\}_{1\leq j\leq M}$, so that (ii) is satisfied and such that $w(F)\leq w(\cup_{1\leq j\leq M} R_j)$. From the collection $\{R_j\}_{1\leq j \leq M}$ we will now choose a subcollection $\{R_k ^s\}_{1\leq k\leq N}$ so that (i) and (iii) are also satisfied. First we reorder the rectangles $R_j$ so that $P^\perp(R_1)\geq P^\perp(R_2)\geq \cdots\geq P^\perp(R_M)$. We choose $R_1 ^s\coloneqq R_1$ and assume that the rectangles $R_1 ^s, R_2 ^s,\ldots, R_\tau ^s,$ have been selected. Also let $1\leq j_o< M$ so that $R_\tau ^s=R_{j_o}$. We then choose $R_{\tau+1} ^s$ to be the rectangle with the smallest index among the rectangles $S\in\{R_{j_o+1},\ldots, R_M\}$ that satisfy
\begin{align*}
 |S\cap \bigcup_{j\leq \tau} (R_j ^s)^*|\leq \epsilon|S|.
\end{align*}
Since the collection $\{R_j\}_{1\leq j \leq M}$ is finite, the selection process will end after a finite number of $N$ steps, and the collection $\{R_k ^s\}_{1\leq k\leq N}$ will automatically satisfy (i). Of course this subcollection still satisfies (ii). Now assume that some $S\in\{R_1,\ldots,R_M\}$ was not selected. We can then find some positive integer $K\in\{1,2,\ldots,N\}$ such that
\begin{align*}
 |S\cap \bigcup_{j\leq  K} (R_j ^s)^*|> \epsilon|S|.
\end{align*}
Thus we get for all $x\in S$
\begin{align*}
M_n(\ind_{\cup_{j\leq  N} (R_j ^s)^*})(x)\geq M_n(\ind_{\cup_{j\leq  K} (R_j ^s)^*})(x)>\epsilon,
\end{align*}
which means that
\begin{align*}
 \bigcup_{\substack{1\leq j\leq N \\ R_j\, \text{not selected} }} R_j \subseteq \{x: M_n(\ind_{\cup_{j\leq N} (R_j ^s)^*})(x)>\epsilon \}.
\end{align*}
However, since $w\in A_\infty ^*$ we know that $M_n:L^{p_o}(w) \to L^{p_o,\infty}(w)$ for some $p_o>1$. We conclude that
\begin{align*}
 w\big(\bigcup_{\substack{1\leq j\leq N \\ R_j\,\text{not selected} }} R_j\big)\lesssim_{\epsilon,w,n} w(\cup_{j\leq N} (R_j ^s)^*) \lesssim w(\cup_{j\leq N} R_j ^s).
\end{align*}
Thus $w(F)\leq w(\cup_{1\leq j\leq M} R_j)\lesssim_{\epsilon,w,n} w(\cup_{1\leq k\leq N} R_k ^s)$ as we wanted.
\end{proof}
We are now ready to give the proof of our main result.
\begin{proof}[Proof of the Main Theorem] We assume that $n\geq 2$ since in one dimension $M_1$ is the usual Hardy-Littlewood maximal function and there is nothing (new) to prove. We henceforth write $M$ for $M_n$ since the dimension $n$ is fixed throughout the proof. Furthermore, it suffices to prove the theorem for $\lambda=1$. Let $F \coloneqq \{x\in\R^n: M(f)(x)>1\}$ and consider the collection $\mathcal R\coloneqq \{R_k ^s\}_{k=1} ^N \subset\mathfrak R_n$ given by Lemma \ref{l.sub}. By (i) of that Lemma the collection $\mathcal R$ has the sparseness property \eqref{e.p2}. We assume that $\epsilon>0$ was chosen small enough in Lemma \ref{l.sub}, and thus in \eqref{e.p2}, so that Lemma \ref{l.t*1} is valid. Observe that \eqref{e.p2} also implies that
\begin{align*}
 \Abs{R_k ^s\cap \bigcup_{j<k} R_j ^s}\leq \epsilon |R_k ^s|.
\end{align*}
 By choosing $\epsilon>0$ small enough we can also assume that $w(R_k ^s \cap \bigcup_{j<k} R_j ^s)\leq \frac{1}{2} w(R_k ^s)$, according to Remark \ref{r.esmall}. Setting $E_k\coloneqq R_k ^s\setminus \bigcup_{j<k} R_j ^s$ we will thus have
\begin{align} \label{e.recmass}
w(R_k ^s)\geq w(E_k)\geq \frac{1}{2}w(R_k ^s),\quad k=1,2,\ldots,N,
\end{align}
and the choice of $\epsilon>0$ depends only on the weight $w\in A_\infty ^*$. Denoting $\Omega \coloneqq \bigcup_{k=1} ^N R_k ^s$, we use (ii) and (iii) of Lemma \ref{l.sub} to estimate
\begin{align*}
 w(F)&\lesssim_{\epsilon,w,n} w(\Omega) \leq \sum_{k=1} ^N w(R_k ^s)\leq \sum_{k=1} ^N \frac{w(R_k ^s)}{|R_k ^s|} \int_{R_k ^s} |f(y)|dy
\\
&= \int_{\Omega} f(x) \sum_{k=1} ^N \frac{w(R_k ^s)}{|R_k ^s|}\ind_{R_k ^s} (x) dx.
\end{align*}
Define the linear operators
\begin{align*}
 Tf(x)=\sum_{k=1} ^N \frac{1}{|R_k ^s|} \int_{R_k ^s} f(y)dy \ind_{E_k }(x),\quad	
T^*f(x) = \sum_{k=1} ^N \frac{1}{|R_k ^s|} \int_{E_k} f(y)dy \ind_{R_k ^s}(x),\quad x\in\R^n.
\end{align*}
For locally integrable $f,g$ we have
\begin{align*}
 \int_{\Omega} Tf(x)g(x)dx = \int_{\Omega} T^*g(x) f(x) dx, \quad Tf(x)\leq M f(x),\quad x\in \R^n.
\end{align*}
By \eqref{e.recmass} we have
\begin{align*}
T^* w(x)=\sum_{k=1} ^N \frac{w(E_k ^s)}{|R_k ^s|} \ind_{R_k ^s}(x) \simeq \sum_{k=1} ^N \frac{w(R_k ^s)}{|R_k ^s|} \ind_{R_k ^s}(x)
\end{align*}
 thus we can estimate for any $\delta>0$
\begin{align*}
 w(\Omega)&\lesssim \int_\Omega f T^* w\leq \int_{\{\Omega:T^* w\leq \delta Mw\}} f(x) T^* w(x)dx + \int_{\{\Omega:T^* w>\delta Mw\}} f(x) \frac{T^* w(x)}{Mw(x)} Mw(x) dx
\\
&\leq \delta \int_{\R^n}  |f(x)|M w(x)dx+ \int_{\{\Omega:T^* w>\delta Mw\}} f(x) \frac{T^* w(x)}{Mw(x)} Mw(x) dx.
\end{align*}
We will use the following elementary estimate: For each $\theta>0$ there exists a constant $c_\theta>0$ such that for all $s,t\geq 0$ we have
\begin{align}
 st \leq c_\theta s[1+(\log^+s)^{n-1}]+\exp(\theta t^\frac{1}{n-1})-1,\quad n\geq 2.
\end{align}
The interested reader can find a detailed proof of this classical inequality in \cite{Bagby}. Applying this pointwise estimate we get for every $\theta>0$:
\begin{align*}
 w(\Omega)&\lesssim(\delta+c_\theta)  \int_{\R^n} |f(x)|\Big(1+\big(\log^+|f(x)|\big)^{n-1} \Big)Mw(x)dx
\\
&\quad \quad +\int_{\{\Omega:\ T^*w >\delta M w\}} \bigg( \exp \bigg[ \theta  \bigg( \frac{T^* w(x)}{Mw(x)} \bigg)^\frac{1}{n-1} \bigg] -1\bigg)Mw(x)dx.
\end{align*}

We now estimate the last term,

\begin{align*}
 Q&\coloneqq \int_{\{\Omega:\ T^*w >\delta M w\}} \bigg(\exp \bigg[ \theta  \bigg( \frac{T^* w(x)}{Mw(x)} \bigg)^\frac{1}{n-1} \bigg] -1\bigg) Mw(x)dx
\\
& =\sum_{k=1} ^\infty \frac{\theta^k}{k!} \int_{\{\Omega:\ T^*w>\delta M w\}} \bigg(\frac{T^*w(x)}{Mw(x)}\bigg )^\frac{k}{n-1}  Mw(x)  dx \leq \sum_{1\leq k\leq n-1} +\sum_{k>n-1}\eqqcolon I+II.
\end{align*}
For $I$ we just observe that since $k/ (n-1)\leq 1$ and $T^*w/(\delta Mw)> 1$ we have the elementary estimate
\begin{align*}
 \big(T^*w/Mw\big)^\frac{k}{n-1}& =\big(T^* w/(\delta Mw)\big)^\frac{k}{n-1} \delta^\frac{k}{n-1}
\\
&=\delta^{\frac{k}{n-1}-1} \big(T^* w/(\delta Mw)\big)^{\frac{k}{n-1}-1} \frac{T^*w}{Mw}
\\
& \leq \delta^{\frac{k}{n-1}-1} \frac{T^*w}{Mw}.
\end{align*}
So we have
\begin{align*}
 I \leq \sum_{1\leq k\leq n-1} \frac{\theta^k \delta^{\frac{k}{n-1}-1} }{k!} \int_\Omega T^* w(x)dx \leq \frac{\theta}{\delta} e^{\delta^\frac{1}{n-1}} \int_\Omega T1(x)w(x)\lesssim_{\delta,n}  \theta  w(\Omega).
\end{align*}
Here we abuse notation by denoting $T1, T^*1(x)$ the action of $T,T^*$, respectively, on the constant function $1$. For $II$ we use the fact that $T^*w\simeq \sum_{k=1} ^N \frac{w(R_k ^s)}{|R_k ^s|}\ind_{R_k ^s} \leq Mw\sum_{k=1} ^N \ind_{R_k ^s}\simeq Mw T^* 1$. We have
\begin{align*}
 II&\leq \sum_{k>n-1}\int_\Omega \frac{\theta^k}{k!} \bigg(\frac{T^*w(x)}{Mw(x)}\bigg )^{\frac{k}{n-1}-1} \frac{T^*w(x)}{Mw(x)}  Mw(x)  dx
\\
&\lesssim\sum_{k>n-1}\frac{\theta^k}{k!} \int_\Omega (T^*1(x))^{\frac{k}{n-1}-1} T^*w (x)dx
\\
&\lesssim \sum_{k>n-1}\frac{\theta^k}{k!} \int_\Omega (T^*1(x))^{\frac{k}{n-1}} T^*w (x)dx\quad \bigg(\text{because}\ T^*1\gtrsim 1\ \text{on}\ \Omega\bigg)
\\
&\lesssim \sum_{k>n-1}\frac{\theta^k}{k!} \int_\Omega T(  T^*(1) ^\frac{k}{n-1} )(x)w(x)dx\eqqcolon \sum_{k>n-1} \frac{\theta^k}{k!}Q_k.
\end{align*}
Since $w\in A_{p_o} ^*$ for some $1< p_o <\infty$ and $Tf\leq Mf$ we have $\|T(f)\|_{L^{p_o}(w)}\lesssim_{w,n} \|f\|_{L^{p_o}(w)}$. This together with Lemma \ref{l.t*1} yields
\begin{align*}
 Q_k \lesssim_{w,n} w(\Omega)^\frac{1}{p_o '}\bigg( \int_\Omega \abs{T^*1(x)}^\frac{kp_o}{n-1} w(x)dx \bigg)^\frac{1}{p_o}\lesssim_{w,n} [kp_o/(n-1)]^{k} w(\Omega).
\end{align*}
Overall we get
\begin{align*}
 II\lesssim_{w,n} &\sum_{k>n-1} \frac{\theta^k}{k!} \frac{(kp_o)^k}{(n-1)^k} w(\Omega) \lesssim\sum_{k>n-1} \frac{({\theta e p_o}/{(n-1)})^k}{\sqrt{k}}  w(\Omega)
\\
&\lesssim \frac{(\theta e p_o/(n-1))^{n}}{\sqrt{n}}w(\Omega),
\end{align*}
if $\theta$ is small enough. Thus $Q\lesssim_{w,n} \theta w(\Omega)$. We have proved that for $\theta>0$ small and fixing $\delta=1$ (say) in the previous estimates we have
\begin{align*}
	w(\Omega)\lesssim_{w,n}\theta w(\Omega)+ (1+c_\theta)\int_{\R^n} |f(x)|\Big(1+\big(\log^+|f(x)|\big)^{n-1} \Big)Mw(x)dx.
\end{align*}
Choosing $\theta>0$ sufficiently small we thus have
\begin{align*}
	w(F)\lesssim w(\Omega)\lesssim_{w,n} \int_{\R^n} |f(x)|\Big(1+\big(\log^+|f(x)|\big)^{n-1} \Big)Mw(x)dx,
\end{align*}
which is the desired estimate.
\end{proof}

We have actually proved the following weighted analogue of the C\'ordoba-Fefferman covering lemma from \cite{CF}.
\begin{lemma} Let $w\in A_\infty ^*$. Suppose that $\{R_j\}_{j\in J}$ is a finite sequence of rectangles from $\mathfrak R_n$. Then there exists a subcollection $\{R_k ^s\}_{1\leq k \leq N}\subset \cup_{j\in J}R_j$ such that
\begin{list}{}{}
	\item[(i)] $\quad w(\cup_{j\in J}R_j) \lesssim_{w,n} w(\cup_{k=1} ^N R_k ^s).$
	\item[(ii)] For every $\delta>0$ there exists $\theta_o=\theta_o(\delta,w,n)>0$ such that, for every $\theta<\theta_o$ we have
	\begin{align*} \int_{\{\Omega:\ T^*w(x)>\delta M w(x)\}} \bigg(\exp \bigg[ \theta  \bigg( \frac{T^* w(x)}{Mw(x)} \bigg)^\frac{1}{n-1} \bigg] -1\bigg) Mw(x)dx\lesssim_{w,n,\theta,\delta} w(\cup_{k=1} ^N R_k ^s) .\end{align*}
\end{list}
Here $T^*w=\sum_{k=1} ^N \frac{w(R_k ^s)}{|R_k ^s|}\ind_{R_k ^s}$ and $M$ denotes the strong maximal function.
\end{lemma}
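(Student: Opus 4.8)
The plan is to observe that both assertions are already contained, essentially verbatim, in the proof of the Main Theorem; the only new point is to run the selection intrinsically on the given family $\{R_j\}_{j\in J}$, with no reference to any function $f$. For the construction I would apply the greedy algorithm from the proof of Lemma \ref{l.sub} directly to $\{R_j\}_{j\in J}$: reorder the rectangles so that $P^\perp(R_1)\geq P^\perp(R_2)\geq\cdots$, set $R_1^s\coloneqq R_1$, and at each step let $R_{k+1}^s$ be the rectangle of smallest index among those following the last selected one whose overlap with $\bigcup_{j\leq k}(R_j^s)^*$ is at most $\epsilon$ times its own measure. This yields a finite subcollection $\{R_k^s\}_{1\leq k\leq N}$ with the sparseness property \eqref{e.p2}. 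As usual I fix $\epsilon>0$ small, depending only on $w$, so that Remark \ref{r.esmall}, Lemma \ref{l.slice} and Lemma \ref{l.t*1} all apply. The crucial observation is that property (ii) of Lemma \ref{l.sub}, the only place $f$ ever entered, is never used in establishing the sparseness or the covering estimate, so the algorithm transfers unchanged.

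Property (i) is then exactly the argument of Lemma \ref{l.sub}(iii). Any rectangle $R_j$ that was discarded must satisfy $\Abs{R_j\cap\bigcup_{k\leq N}(R_k^s)^*}>\epsilon\abs{R_j}$, so $R_j\subseteq\{x: M_n(\ind_{\cup_k (R_k^s)^*})(x)>\epsilon\}$. Since $w\in A_\infty^*$ gives $M_n\colon L^{p_o}(w)\to L^{p_o,\infty}(w)$ for some $p_o>1$, and since the $A_\infty^*$ doubling \eqref{e.doubling} allows one to replace the dilates $(R_k^s)^*$ by the rectangles $R_k^s$ at the cost of a constant, I obtain $w(\bigcup_{j\in J}R_j)\lesssim_{w,n} w(\bigcup_{k=1}^N R_k^s)$.

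For property (ii) I would reproduce the estimate of the quantity $Q$ from the proof of the Main Theorem, now with $\Omega\coloneqq\bigcup_{k=1}^N R_k^s$. Expanding the exponential in its Taylor series and splitting the sum at the index $n-1$, the low-order part $I$ is controlled by $\theta w(\Omega)$ using that $T^*w>\delta Mw$ on the domain of integration together with $\int_\Omega T^*w\, dx=\int_\Omega (T1) w\, dx=w(\Omega)$, while the tail $II$ is handled by bounding each term $Q_k=\int_\Omega T\big((T^*1)^{k/(n-1)}\big) w\, dx$ through the $L^{p_o}(w)$-boundedness of $T$ and the sharp overlap estimate $c_{p,n}=O_n(p^{n-1})$ of Lemma \ref{l.t*1}. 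Summing the series gives $II\lesssim_{w,n}(\theta e p_o/(n-1))^n w(\Omega)$ provided $\theta$ is small, whence $Q\lesssim_{w,n}\theta w(\Omega)$ for all $\theta<\theta_o(\delta,w,n)$. Since $w(\Omega)=w(\bigcup_{k=1}^N R_k^s)$, this is precisely property (ii).

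The only genuinely delicate point, inherited from the Main Theorem, is the convergence of the tail $II$: it is exactly the factorial decay $1/k!$ competing against the polynomial growth $(kp_o/(n-1))^k$ produced by the sharp constant of Lemma \ref{l.t*1} that renders the exponential integral finite, and this competition is what forces the smallness of $\theta$. Everything else is a direct transcription of the two steps above, so no new difficulty arises beyond recognizing that the estimates for $Q$ depend only on the sparseness structure and on $w\in A_\infty^*$, and not on the function $f$ used to generate the original cover.
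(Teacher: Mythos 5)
Your proposal is correct and follows essentially the same route as the paper: the lemma is stated there precisely as a byproduct of the proof of the Main Theorem, and you correctly isolate the two $f$-independent ingredients, namely the selection algorithm of Lemma \ref{l.sub} (whose sparseness and covering conclusions never use property (ii)) and the estimate of the quantity $Q$, which depends only on \eqref{e.p2}, Lemma \ref{l.t*1} and $w\in A_\infty^*$. One small imprecision worth fixing: the replacement $w\bigl(\cup_k (R_k^s)^*\bigr)\lesssim_{w,n} w\bigl(\cup_k R_k^s\bigr)$ does not follow from \eqref{e.doubling} alone (that condition bounds the ratio in the opposite direction); as in the proof of Lemma \ref{l.sub}, it follows from the weak-type bound $M_n\colon L^{p_o}(w)\to L^{p_o,\infty}(w)$ applied to $\ind_{\cup_k R_k^s}$, since $\cup_k (R_k^s)^*\subseteq\{x\in\R^n : M_n(\ind_{\cup_k R_k^s})(x)\geq 1/3\}$.
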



\begin{bibsection}
\begin{biblist}

\bib{Bagby}{article}{
   author={Bagby, Richard J.},
   title={Maximal functions and rearrangements: some new proofs},
   journal={Indiana Univ. Math. J.},
   volume={32},
   date={1983},
   number={6},
   pages={879--891},
   issn={0022-2518},
   review={\MR{721570 (86f:42010)}},
   doi={10.1512/iumj.1983.32.32060},
}

\bib{BaKu}{article}{
   author={Bagby, Richard J.},
   author={Kurtz, Douglas S.},
   title={$L({\rm log}\,L)$ spaces and weights for the strong maximal
   function},
   journal={J. Analyse Math.},
   volume={44},
   date={1984/85},
   pages={21--31},
   issn={0021-7670},
   review={\MR{801285 (87c:42018)}},
   doi={10.1007/BF02790188},
}

\bib{CF}{article}{
   author={Cordoba, A.},
   author={Fefferman, R.},
   title={A geometric proof of the strong maximal theorem},
   journal={Ann. of Math. (2)},
   volume={102},
   date={1975},
   number={1},
   pages={95--100},
   issn={0003-486X},
   review={\MR{0379785 (52 \#690)}},
}
\bib{FS}{article}{
   author={Fefferman, C.},
   author={Stein, E. M.},
   title={Some maximal inequalities},
   journal={Amer. J. Math.},
   volume={93},
   date={1971},
   pages={107--115},
   issn={0002-9327},
   review={\MR{0284802 (44 \#2026)}},
}
\bib{F}{article}{
   author={Fefferman, R.},
   title={Strong differentiation with respect to measures},
   journal={Amer. J. Math.},
   volume={103},
   date={1981},
   number={1},
   pages={33--40},
   issn={0002-9327},
   review={\MR{601461 (83g:42009)}},
   doi={10.2307/2374188},
}

\bib{GaRu}{book}{
   author={Garc{\'{\i}}a-Cuerva, Jos{\'e}},
   author={Rubio de Francia, Jos{\'e} L.},
   title={Weighted norm inequalities and related topics},
   series={North-Holland Mathematics Studies},
   volume={116},
   note={Notas de Matem\'atica [Mathematical Notes], 104},
   publisher={North-Holland Publishing Co.},
   place={Amsterdam},
   date={1985},
   pages={x+604},
   isbn={0-444-87804-1},
   review={\MR{807149 (87d:42023)}},
}

\bib{JT}{article}{
   author={Jawerth, Bj{\"o}rn},
   author={Torchinsky, Alberto},
   title={The strong maximal function with respect to measures},
   journal={Studia Math.},
   volume={80},
   date={1984},
   number={3},
   pages={261--285},
   issn={0039-3223},
   review={\MR{783994 (87b:42024)}},
}

\bib{JMZ}{article}{
   author={Jessen, B.},
   author={Marcinkiewicz, J.},
   author={Zygmund, A.},
   title={Note on the differentiability of multiple integrals},
   journal={Fund. Math.},
   volume={25},
   date={1935},
   number={},
   pages={217--234},
}

\bib{Lin}{book}{
   author={Lin, Kai-Ching},
   title={HARMONIC ANALYSIS ON THE BIDISC},
   note={Thesis (Ph.D.)--University of California, Los Angeles},
   publisher={ProQuest LLC, Ann Arbor, MI},
   date={1984},
   pages={56},
   review={\MR{2633524}},
}
		
\bib{LOSH}{article}{
   author={Long, Rui Lin},
   author={Shen, Zhong Wei},
   title={A note on a covering lemma of A. Cordoba and R. Fefferman},
   note={A Chinese summary appears in Chinese Ann.\ Math.\ Ser.\ A {\bf 9}
   (1988), no.\ 4, 506},
   journal={Chinese Ann. Math. Ser. B},
   volume={9},
   date={1988},
   number={3},
   pages={283--291},
   issn={0252-9599},
   review={\MR{968464 (91b:42037)}},
}

\bib{Mitsis}{article}{
   author={Mitsis, Themis},
   title={The weighted weak type inequality for the strong maximal function},
   journal={J. Fourier Anal. Appl.},
   volume={12},
   date={2006},
   number={6},
   pages={645--652},
   issn={1069-5869},
   review={\MR{2275389 (2007i:42016)}},
   doi={10.1007/s00041-005-5060-3},
}

\bib{Per}{article}{
   author={P{\'e}rez, C.},
   title={A remark on weighted inequalities for general maximal operators},
   journal={Proc. Amer. Math. Soc.},
   volume={119},
   date={1993},
   number={4},
   pages={1121--1126},
   issn={0002-9939},
   review={\MR{1107275 (94a:42016)}},
   doi={10.2307/2159974},
}

\end{biblist}
\end{bibsection}

\end{document}